\newtheorem{theorem}{\bf Theorem}[section]
\newtheorem{proposition}{\bf Proposition}[section]
\newtheorem{definition}{\bf Definition}[section]
\newtheorem{remark}{\bf Remark}[section]
\newenvironment{proof}[1][Proof.]{\begin{trivlist} \item[\hskip \labelsep {\bfseries #1}]}{\end{trivlist}}
\title{Linear Cocycles over Lorenz-like Flows}
\author{Mohammad Fanaee}
\begin{document}

\maketitle

\begin{abstract}
We prove that the Lyapunov exponents of typical fiber bunched linear cocycles over Lorenz-like flows have multiplicity one: the set of exceptional cocycles has infinite codimention, i.e. it is locally contained in finite unions of closed submanifolds with arbitrarily high codimension.\\
\end{abstract}

\setcounter{tocdepth}{1}

\tableofcontents

\section{Introduction}
A linear cocycle over a flow $f^t:\Lambda\rightarrow\Lambda$ is a flow $F^t:\Lambda\times\mathbb C^d\rightarrow\Lambda\times\mathbb C^d$ of the form
$$F^t(x,v)=(f^t(x),A^t(x)v)$$
where each $A^t(x):\mathbb C^d\rightarrow\mathbb C^d$ is a linear isomorphism. The Lyapunov exponents are the exponential rates
$$\lambda(x,v)=\lim_{|t|\rightarrow\infty}\frac{1}{t}\log||A^t(x)v||,~v\neq0.$$

By Oseledets [14] this limit exists for every $v\in\mathbb C^d$ on a full measure set of $x\in\Lambda$, relative to any invariant measure $m$. There are at most $d$ Lyapunov exponents; they are constant on orbits and vary measurably with the base point. Thus Lyapunov exponents are constant if $m$ is ergodic.\\

Our main interest is to characterize when all exponents have multiplicity one i.e. the subspace of vectors $v\in\mathbb C^d$ that share the same value of $\lambda(x,v)$ has dimension one.

There has been much recent progress on this problem, specially when the base dynamics is hyperbolic, see [9,5,6,11]. Here, we extend the theory to the case when the base dynamics is a Lorenz-like attractor.\\

A Lorenz-like flow in 3-dimensions admits a cross section $S$ and a Poincar\'e transformation $P:S\backslash\Gamma\rightarrow S$ defined outside a curve $\Gamma$ which is contained in the intersection of $S$ with the stable manifold of some hyperbolic equilibrium. Trajectories through $\Gamma$ just converge to the equilibrium and the other trajectories through $S$ eventually return to $S$. Their accumulation set is the so-called geometric Lorenz attractor. Moreover, there is an invariant splitting
$$T_{\Lambda}M=E^s\oplus E^{cu}$$
of the tangent bundle where the uniformly contracting bundle $E^s$ has dimension 1, and the volume-expanding bundle $E^{cu}$ which contains the flow direction has dimension 2. Another important feature is that the Poincar\'e transformation of this flow admits an invariant contracting foliation $\mathcal F$ through which the dynamics can be reduced to that of a map on the interval (leaf space of $\mathcal F$). A Lorenz-like flow admites an invariant physical probability measure which is ergodic.

\subsection{Cocycles over maps}
A linear cocycle over an invertible transformation $f:N\rightarrow N$  is a transformation $F:N\times\mathbb C^d\rightarrow N\times\mathbb C^d$ satisfying $f\circ \pi=\pi\circ F$ which acts by linear isomorphisms $A(x)$ on fibers. So, the cocycle has the form
$$F(x,v)=(f(x),A(x)v)$$
where
$$A:N\rightarrow\mathrm{GL}(d,\mathbb C).$$
Conversely, any $A:N\rightarrow\mathrm{GL}(d,\mathbb C)$ defines a linear cocycle over $f$. Note that $F^n(x,v)=(f^n(x),A^n(x)v)$, where
$$A^n(x)=A(f^{n-1}(x))~...~A(f(x))A(x),$$
$$A^{-n}(x)=(A^n(f^{-n}(x)))^{-1},$$
for any $n\geq 1$, and $A^0(x)=\mathrm{id}$.

Let $\mu$ be a probability measure invariant by $f$. Oseledets Theorem [14] states that there exist a Lyapunov splitting
 $$E_1(x)\oplus...\oplus E_k(x),~1\leq k=k(x)\leq d,$$
and Lyapunov exponents $\lambda_1(x)>...>\lambda_k(x)$,
$$\lambda_i(x)=\lim_{|n|\rightarrow\infty}\frac{1}{n}\log\parallel A^n(x)v_i\parallel,~v_i\in E_i(x),~1\leq i\leq k,$$
at $\mu$-almost every point. Lyapunov exponents are invariant, uniquely defined at almost every $x$ and vary measurably with the base point $x$. Thus, Lyapunov exponents are constant when $\mu$ is ergodic. Then $\{\lambda_1,...,\lambda_k\}$ is  called the Lyapunov spectrum of $A$.

We recall that, for any $r\in\mathbb N\cup\{0\}$ and $0\leq\rho\leq 1$, the $C^{r,\rho}$ topology is defined by
$$||A||_{r,\rho}=\max_{0\leq i\leq r}\sup_x||D^iA(x)||+\sup_{x\neq y}\dfrac{||D^rA(x)-D^rA(y)||}{\mathrm{d}(x,y)^{\rho}}$$
(for $\rho=0$ omit the last term) and then
$$\mathcal C^{r,\rho}(N,d,\mathbb C)=\{A:N\rightarrow\mathrm{GL}(d,\mathbb C):~||A||_{r,\rho}<+\infty\}$$
is a Banach space. We assume that $r+\rho>0$ which implies $\eta-$H\"older continuity:
$$\parallel A(x)-A(y)\parallel\leq\parallel A\parallel_{0,\eta}\mathrm{d}(x,y)^\eta,$$
with
\[\eta=\left\{\begin{array}{cl}
\rho&r=0\\1&r\geq 1.
\end{array}\right.\]

\subsection{Fiber bunching condition}
Suppose that $N=\mathbb N^{\mathbb Z}$, the full shift space with countably many symbols, and  $f:N\rightarrow N$ the shift map
$$f((x_n)_{n\in\mathbb Z})=(x_{n+1})_{n\in\mathbb Z}.$$
A cylinder of $N$ is any subset
$$[a_k,...;a_0;...,a_l]=\{x:~x_j=\iota_j,~j=k,...,l\}$$
of $N$.
We endowed $N$ with topology generated by cylinders. The local stable and local unstable sets of any $x\in N$ are defined as
$$W^s_{\mathrm{loc}}(x)=\{y:~x_n=y_n,~n\geq 0\}$$
and
$$W^u_{\mathrm{loc}}(x)=\{y:~x_n=y_n,~n<0\}.$$

Assume that $N$ is endowed with a metric d for which\\
(i) $\mathrm d(f(y),f(z))\leq\theta(x)\mathrm d(y,z)$, for all $y,z\in W^s_{\mathrm{loc}}(x)$,\\
(ii) $\mathrm d(f^{-1}(y),f^{-1}(z))\leq\theta(x)\mathrm d(y,z)$, for all $y,z\in W^u_{\mathrm{loc}}(x)$,\\
where $0<\theta(x)\leq\theta<1$, for all $x\in N$.

Let $A$ be an  $\eta$-H\"{o}lder continuous linear cocycle over $f$.

\begin{definition}
$A$ is fiber bunched if there exists some constant $\tau\in(0,1)$ such that
$$||A(x)||~||A(x)^{-1}||\theta(x)^{\eta}<\tau,$$
for any $x\in N$.
\end{definition}

\begin{remark}
Fiber bunching is an open condition in $C^{r,\rho}(N,d,\mathbb C)$: if $A$ is a fiber bunched linear cocycle then any linear cocycle $B$ sufficiently $C^0$ close to $A$ is also fiber bunched, by definition.
\end{remark}

\subsection{Product structure}
Let $N_u=\mathbb N^{\{n\geq 0\}}$ and $N_s=\mathbb N^{\{n<0\}}$. The map
$$x\mapsto(x_s,x_u)$$
is a homeomorphism form $N$ onto $N_s\times N_u$ where $x_s=\pi_s(x)$ and $x_u=\pi_u(x)$, for natural projections $\pi_s:N\rightarrow N_s$ and $\pi_u:N\rightarrow N_u$. We also consider the maps $f_s:N_s\rightarrow N_s$ and $f_u:N_u\rightarrow N_u$ defined by
$$f_u\circ\pi_u=\pi_u\circ f,$$
$$f_s\circ\pi_s=\pi_s\circ f^{-1}.$$

Assume that $\mu_f$ is an ergodic probability measure for $f$. Let $\mu_s=(\pi_s)_*\mu_f$ and $\mu_u=(\pi_u)_*\mu_f$ be the images of $\mu_f$ under the natural projections. It is easy to see that $\mu_s$ and $\mu_u$ are ergodic probabilities for $f_s$ and $f_u$, respectively. Notice that $\mu_s$ and $\mu_u$ are positive on cylinders, by definition.

We say that $\mu_f$ has product structure if there exists a measurable density function $\omega:N\rightarrow(0,+\infty)$ such that
$$\mu_f=\omega(x)(\mu_s\times\mu_u).$$

Assuming a probability measure which has product strucrure, Bonatti and Viana [9] obtained a general criterion for simplicity of Lyapunov spectrum for cocycles over hyperbolic systems and used it to prove that simplicity holds for generic linear cocycles that satisfy the fiber bunching condition. This criterion has improved by Avila and Viana [5] who used it to prove the Zorich-Kontsevich conjecture [6]. In [11], by geometric tools, we prove

\begin{theorem}
$\mathrm{[F]}$ Lyapunov exponents of typical fiber bunched linear cocycles over complete shift map have multiplicity 1.
\end{theorem}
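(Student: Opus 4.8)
The plan is to reduce the statement to the Avila--Viana criterion for simplicity of the Lyapunov spectrum and then estimate the codimension of the set on which that criterion fails, exploiting the abundance of symbols in the alphabet $\mathbb{N}$.

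First I would record the standard consequence of fiber bunching: $A$ admits stable and unstable holonomies
$$H^{s}_{x,y}=\lim_{n\to+\infty}A^{n}(y)^{-1}A^{n}(x),\qquad H^{u}_{x,y}=\lim_{n\to+\infty}A^{-n}(y)^{-1}A^{-n}(x),$$
the first defined for $y\in W^{s}_{\mathrm{loc}}(x)$, the second for $y\in W^{u}_{\mathrm{loc}}(x)$; the products converge uniformly, are $\eta$-H\"older in $(x,y)$, and vary continuously (indeed locally in a Lipschitz, or $C^{r-1}$, fashion) with $A$ in the $\mathcal{C}^{r,\rho}$ topology, all because $\|A(x)\|\,\|A(x)^{-1}\|\,\theta(x)^{\eta}<\tau<1$. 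Using the product structure $N\cong N_{s}\times N_{u}$ one then has, for a periodic point $p$ and a point $z$ homoclinically related to the orbit of $p$, a well-defined ``holonomy loop'' $\Psi_{p,z}$ acting on the fibre over $p$, obtained by composing $A^{\mathrm{per}(p)}(p)$ with the relevant stable and unstable holonomies along $z$. The criterion then reads: if there is a periodic point $p$ whose return matrix $A^{\mathrm{per}(p)}(p)$ has eigenvalues of pairwise distinct absolute value (\emph{pinching}), together with a homoclinic point $z$ of $p$ for which $\Psi_{p,z}$ carries the family of eigenspaces of $A^{\mathrm{per}(p)}(p)$ into general position with respect to itself (\emph{twisting}), then every Lyapunov exponent of $A$ has multiplicity one. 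So it suffices to bound the codimension of the complement of the pinching-and-twisting cocycles.

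Next I would observe that pinching and twisting are \emph{local, finitary} conditions: pinching at $p$ depends only on the values of $A$ on the finitely many cylinders visited in one period of $p$; twisting along $z$ depends, in addition, only on the values of $A$ on the finitely many coordinates in which the orbit of $z$ differs from that of $p$, the rest being absorbed by the exponentially convergent holonomy products. For a fixed such finite combinatorial configuration $\mathcal{C}$ (a periodic word plus a homoclinic excursion), the failure set $\mathcal{B}_{\mathcal{C}}$ is, near any given cocycle, contained in a \emph{finite union of proper closed real-analytic submanifolds} of the finite-dimensional parameter space $\{A(a):a\in\mathcal{C}\}\subset\mathrm{GL}(d,\mathbb{C})^{|\mathcal{C}|}$: pinching fails on the real-analytic hypersurface where two eigenvalue moduli coincide, and twisting fails on the zero set of finitely many real-analytic functions (minors of $\Psi_{p,z}$ written in the eigenbasis), none of which vanishes identically because varying a single free matrix of $\mathcal{C}$ already moves $\Psi_{p,z}$ transversally to the relevant incidence loci in the flag variety.

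Finally, and this is where I expect the real work to lie, I would use the countably many symbols to produce, for each $k\in\mathbb{N}$, configurations $\mathcal{C}_{1},\dots,\mathcal{C}_{k}$ whose supporting cylinders are pairwise disjoint: pick $k$ disjoint finite blocks $B_{1},\dots,B_{k}$ of symbols and build inside each $B_{i}$ a periodic point and a homoclinic excursion supported entirely on $B_{i}$. A perturbation of $A$ localized to the cylinders of $\mathcal{C}_{i}$ then leaves the periodic data and holonomies of every $\mathcal{C}_{j}$, $j\neq i$, untouched, so the conditions ``$A\in\mathcal{B}_{\mathcal{C}_{1}}$'', \dots, ``$A\in\mathcal{B}_{\mathcal{C}_{k}}$'' constrain disjoint groups of coordinates. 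If $A$ has a Lyapunov exponent of multiplicity $\ge 2$, then by the criterion $A$ fails pinching-or-twisting for \emph{every} configuration, so $A\in\bigcap_{i=1}^{k}\mathcal{B}_{\mathcal{C}_{i}}$; writing each $\mathcal{B}_{\mathcal{C}_{i}}$ as the finite union of its analytic pieces and distributing the intersection shows the exceptional set is locally contained in a finite union of closed submanifolds, each cut out by at least one independent condition in each of the $k$ disjoint blocks, hence of codimension $\ge k$. Since $k$ is arbitrary this is exactly the asserted infinite codimension. The delicate points are: proving the perturbations at different blocks are genuinely independent (uniformity of the holonomy convergence, plus the disjointness of the $\mathcal{C}_{i}$); verifying the non-degeneracy that makes each $\mathcal{B}_{\mathcal{C}}$ a proper subvariety rather than all of parameter space (the twisting part, via explicit holonomy perturbations); and controlling all of this inside $\mathcal{C}^{r,\rho}(N,d,\mathbb{C})$ despite the non-compactness of the countable-alphabet shift, where $\|A(x)\|$ and $\|A(x)^{-1}\|$ are bounded only through finiteness of the $\mathcal{C}^{r,\rho}$ norm.
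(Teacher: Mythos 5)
You should first be aware that the paper does not prove this statement at all: Theorem 1.1 is quoted from reference [11] (the author's companion paper ``Generic simple cocycles over Markov maps''), and the present article only uses it as a black box to deduce the Main Theorem via Propositions 4.1 and 4.2. So there is no in-paper proof to compare against; what follows is an assessment of your outline on its own terms, measured against the framework the introduction says [11] relies on (Bonatti--Viana [9] and Avila--Viana [5]).

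Your strategy is the right one and matches that framework: fiber bunching gives stable and unstable holonomies, the pinching-and-twisting criterion gives simplicity, and the infinite codimension of the exceptional set is obtained by exhibiting arbitrarily many ``independent'' periodic-plus-homoclinic configurations. The use of the countable alphabet to support disjoint configurations, so that the $k$ failure conditions constrain disjoint groups of coordinates, is exactly the mechanism one expects. But two gaps should be flagged. First, you never specify the invariant measure, and the criterion you invoke is not a statement about the map alone: it requires an ergodic measure with product structure (and positive on cylinders), which is a standing hypothesis in [9] and [5] and is precisely what Proposition 3.1 of this paper labors to establish for the Lorenz lift measure. Without fixing such a measure the statement ``the Lyapunov exponents have multiplicity one'' is not well-posed, and the choice of configurations $\mathcal{C}_i$ must respect it (each supporting cylinder must have positive measure). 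Second, your reduction to ``a finite union of proper real-analytic submanifolds of the finite-dimensional parameter space $\{A(a):a\in\mathcal{C}\}$'' is not literally correct: the holonomies entering $\Psi_{p,z}$ depend on the values of $A$ along the entire infinite orbits of $p$ and $z$, not on finitely many coordinates. The standard repair is to show that the map sending $A$ to the relevant finite-dimensional data (the eigenvalue moduli of $A^{\mathrm{per}(p)}(p)$ and the matrix of $\Psi_{p,z}$ in a fixed frame) is a submersion from $\mathcal{C}^{r,\rho}(N,d,\mathbb{C})$, so that the incidence loci pull back to closed submanifolds of the Banach space of the expected codimension; you acknowledge this as a ``delicate point'' but it is the actual content of the theorem, together with the verification that localized perturbations realize that submersion without destroying fiber bunching on a non-compact alphabet.
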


\subsection{Suspension flows}
Consider a suspension flow $f^t:\Lambda\rightarrow\Lambda$ of $f:N\rightarrow N$ and let $T:N\rightarrow\mathbb R$ be the corresponding return time to $N$. Assume that $A^t:\Lambda\rightarrow\mathrm{GL}(d,\mathbb C)$ is a linear cocycle over $f^t$, and define
$$A_f(x)=A^{T(x)}(x),$$
for any $x\in N$. Note that $A_f:N\rightarrow\mathrm{GL}(d,\mathbb C)$ is a linear cocycle over $f$.

Then we define a relative topology as
$$||A^t||_{r,\rho}=||A_f||_{r,\rho}$$
for any $r\in\mathbb N\cup\{0\}$ and $0\leq\rho\leq 1$ with $r+\rho>0$, and let
$$\mathcal C^{r,\rho}(\Lambda,d,\mathbb C)=\{A^t:\Lambda\rightarrow\mathrm{GL}(d,\mathbb C):~||A^t||_{r,\rho}<+\infty\}.$$

\begin{definition}
$A^t$ is fiber bunched if the corresponding linear cocycle $A_f$ is a fiber bunched linear cocycle over $f$.
\end{definition}

\begin{remark}
Note that fiber bunching is an open condition in $\mathcal C^{r,\rho}(\Lambda,d,\mathbb C)$, by definition.
\end{remark}

Our main result is\\\\
\textbf{Main Theorem.}  Typical fiber bunched linear cocycles over geometric Lorenz attractors have simple spectrum.

\section{Lorenz-like flows}
In this section, we recall the basic notions and strategies to construct a geometric Lorenz attractor and the unique physical probability measure and then, we study existence of a Markov structure on these flows.

The geometric Lorenz attractors were introduced in [18,12] as a precise model for the dynamical behavior of the equations

\begin{eqnarray}
\begin{array}{l} \dot x=a(y-x), \\
\dot y=bx-y-xz, \\
\dot z=xy-cx,
\end{array}
\end{eqnarray}
proposed by Lorenz [13],  loosely related to fluid convection and weather prediction. Tucker [16] showed that the Lorenz equations exhibits a geometric Lorenz attractor, for classical parameters $a=10, ~ b=28, ~ c=8/3$.\\

This system of equations is symmetric with respect to the $z-$axis. The singularity \textbf{0} has real eigenvalues $\alpha_{ss}<\alpha_s<0<-\alpha_{ss}<\alpha_u$ with $\alpha_s+\alpha_u>0$. There are also two symmetric saddles $\sigma_1,\sigma_2$ with a real negative and two conjugate complex eigenvalues where the complex eigenvalues have positive real parts. The character of this flow is strongly dissipative, in particular, any maximally positively invariant subset has zero volume.

\subsection{The geometric model}
To construct a geometric Lorenz attractor, we should analyze the dynamics of Lorenz flow in a neighborhood of \textbf{0} imitating the effect of the pair of saddles.

\subsubsection{Poincar\'e transformation}
By construction, there is a cross section $S$ intersecting the stable manifold of $0$ along a curve $\Gamma$ that separates $S$ into $2$ connected components. We denote the corresponding Poincar\'e transformation $$P:S\backslash\Gamma\rightarrow S.$$
Note that the future trajectories of points in $\Gamma$ do not came back to $S$.

We consider the smooth foliation $\mathcal F$ of $S$ into curves having $\Gamma$ as a leaf which are invariant and uniformly contracted by forward iterates of $P$. Indeed, every leaf $\mathcal F_{(x,y)}$ is mapped by $P$ completely inside the leaf $\mathcal F_{P(x,y)}$, and $P|_{\mathcal F_{(x,y)}}$ is a uniform contraction. Indeed, $P$ must have the form
$$P(x,y)=(g(x),h(x,y))$$
which by effect of saddles and singularity, we can assume that $h$ is a contraction along its second coordinate. The map $g$ is uniformly expanding with derivative tending to infinity as one approaches to $\Gamma$. We assume that $|g^\prime|\geq\theta^{-1}>\sqrt 2$ and since the rate of contraction of $h$ on the second coordinate should be much higher than the expansion of $g$, we can take $|\partial_yh|\leq \theta<1$.

\subsubsection{Lorenz map}
Let $\pi$ be the canonical projection of section $S$ into $\mathcal F$, i.e. $\pi$ assigns to each point of $S$ the leaf that contained it. By invariance of $\mathcal F$, one dimensional Lorenz map
$$g:(\mathcal F\backslash\Gamma)\rightarrow\mathcal F$$
is uniquely defined so that
\[\begin{CD}
S\backslash\Gamma @>P>> S\\
@V\pi VV @VV\pi V\\
\mathcal F\backslash\Gamma @>>g>\mathcal F
\end{CD}\]\\
commutes, i.e. $g\circ\pi=\pi\circ P$ on $S\backslash\Gamma$.

One may identify quotient space $S/\mathcal F$ with a compact interval as $I=[-1,1]$, and so
$$g:[-1,1]\backslash\{0\}\rightarrow[-1,1]$$
is smooth on $I\backslash\{0\}$ with a discontinuity and infinite left and right derivatives at $0$. Note that the symmetry of the Lorenz equations implies $g(-x)=-g(x)$.

\subsection{The attractor}
The geometric Lorenz attractor $\Lambda$ is characterized as follows. Note that the restriction of $g$ to both $\{x<0\}$ and $\{x>0\}$ admits continuous extensions to the point $0$. Hence, $g$ may be considered as an extension to a 2-valued map at $0$ and continuous on both $\{x\leq 0\}$ and $\{x\geq 0\}$. Correspondingly, the restriction of the Poincar\'e transformation to each of the connected components of $S\backslash\Gamma$ admits a continuous extension to the closure, each one collapsing the curve $\Gamma$ to a single point. Thus, $P$ may also be considered as a 2-valued transformation defined on the whole cross section and continuous on the closure of each of the connected components. Let
$$\Lambda_P=\bigcap_{n\geq 0}P^n(S)\subset S.$$
We define $\Lambda$ to be the saturation of $\Lambda_P$ by the Lorenz flow, that is, the orbits of its points. Therefore, orbits in $\Lambda$ intersect the cross section infinitely often, both forward and backward.

Dynamical properties of $\Lambda$ may be deduced from corresponding properties for the quotient map $h$. More important, a quotient map with similar properties exists for all nearby vector fields, and so such properties are robust for these flows.

\subsection{Physical probability measure}
The existence of a unique absolutely continuous invariant probability $\mu_g$ which is ergodic and $0<\frac{d\mu_g}{d(Leb)}<+\infty$ for Lorenz one-dimensional map $g$ is well-known (see [16] for more details).

One may construct an invariant probability measure $\mu_P$ on $\Lambda_P$, as the lifting of $\mu_g$. Indeed, we may think of $\mu_g$ as a probability measure on Borel subsets of $\mathcal F$. Since $P$ is uniformly contracting on leaves of $\mathcal F$, one concludes that the sequence
$$(P^n_*\mu_g)_{n\geq 1},$$
of push-forwards is weak$^*$-Cauchy: given any continuous $\varphi:S\rightarrow\mathbb R$,
$$\int\varphi~d(P^n_*\mu_g)=\int(\varphi\circ P^n)~d\mu_g,~n\geq 1,$$
is a Cauchy sequence in $\mathbb R$. Define the probability measure $\mu_P$ as the weak$^*$-limit of this sequence that is
$$\int\varphi~d\mu_P=\lim_{n\rightarrow+\infty}\int\varphi~d(P^n_*\mu_g),$$
for each continuous function $\varphi$. Thus $\mu_P$ is invariant under $P$, and it is a physical probability measure on Borel subsets of $\Lambda_P$ which is ergodic.

Later, as the Poincar\'e transformation may be extended to the Lorenz flow through a suspension construction, the invariant probability $\mu_P$ corresponds to an ergodic physical probability measure $m$ on $\Lambda$: Denote by
$R:S\backslash\Gamma\rightarrow(0,+\infty)$ the \textit{first return time} to $S$ defined by
$$P(x)=f^{R(x)}(x).$$
The first return time $R$ is Lebesgue integrable, since
$P(x)\approx|\log(\mathrm{d}(x,\Gamma))|$, for $x$ close to $\Gamma$. This follows that
$$\int R~d\mu_P<+\infty.$$
Let $\sim$ be an equivalence relation on $S\times\mathbb R$ defined as $(x,R(x))\sim(P(x),0)$. Set $\tilde S=(S\times\mathbb R)/\sim$ and define the finite measure
$$\tilde\mu=\pi_*(\mu_P\times dt)$$
where $\pi:S\times\mathbb R\rightarrow \tilde S$ is the quotient map and $dt$ is Lebesgue measure in $\mathbb R$. Define $\phi:\tilde S\rightarrow M$ as $\phi(x,t)=f^t(x)$, and let
$$m=\phi_*\tilde\mu.$$
One may check also that
$$\frac{1}{T}\int_0^T\varphi(f^t(x))~dt~\rightarrow~\int\varphi~dm$$
as $T\rightarrow+\infty$, for any continuous function $\varphi:M\rightarrow\mathbb R$, and Lebesgue almost every $x\in\phi(\tilde S)$.

\section{A symbolic structure}
Consider a Lorenz one dimensional map $g:I\backslash\{0\}\rightarrow I$.
\begin{theorem}
$\mathrm{[10]}$ There exists a return map $\hat g$, an interval $\hat I=(-\delta,\delta),~0<\delta<1$, and a partition $\{\hat I(l):~l\in\mathbb N\}$ to subintervals of $\hat I$, Lebesgue mod 0, for which $\hat g$ maps any $\hat I(l)$ diffeomorphically onto $\hat I$, and the return time $\hat r$ is Lebesgue integrable. Moreover, there exists a constant $0<c<1$ such that, for all $x,y$ in any $\hat I(l)$,
$$\log\frac{|\hat g^\prime(x)|}{|\hat g^\prime(y)|}\leq c^{n(x,y)}$$
where $n(x,y)=\min\{n:~\hat g^n(x)\in\hat I(l_i),~\hat g^n(y)\in\hat I(l_j),~i\neq j\}$.\\
\end{theorem}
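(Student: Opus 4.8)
The plan is to realize $\hat g$ as a full--branch, uniformly expanding Markov map induced from $g$ over a small interval around the critical point $0$, and then to extract the distortion estimate from the uniform expansion of $g$ together with its known local behaviour near $0$; the detailed construction is carried out in [10], and what follows is the scheme.

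\emph{The induced Markov map.} First I would fix the inducing domain: since $0$ is a singularity, hence not periodic, and --- by standard facts about expanding interval maps --- intervals whose endpoint orbits never re-enter them exist of arbitrarily small diameter, I would take a symmetric \emph{nice} interval $\hat I=(-\delta,\delta)$, with $\delta$ as small as needed, for which $g^{n}(\{-\delta,\delta\})\notin\hat I$ for all $n\ge1$ (by oddness of $g$ only one endpoint condition must be checked). For $x\in\hat I$ set $\hat r(x)=\min\{n\ge1:$ some interval $\omega\ni x$ is mapped by $g^{n}$ diffeomorphically onto $\hat I\}$, and let the $\hat I(l)$ be the connected components of the level sets $\{\hat r=r_{l}\}$ intersected with the corresponding component of $g^{-r_{l}}(\hat I)$, so that $\hat g:=g^{\hat r}$ carries each $\hat I(l)$ diffeomorphically onto $\hat I$. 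Niceness makes the $\hat I(l)$ honest subintervals of $\hat I$, and the complementary set --- preimages of $0$, branch endpoints, and $\{\hat r=\infty\}$ --- has zero Lebesgue measure (for $\{\hat r=\infty\}$ one uses that the $g$--image of any interval eventually contains $\hat I$, a consequence of $|g'|\ge\theta^{-1}>\sqrt2$ and topological transitivity of $g$); hence $\{\hat I(l):l\in\mathbb N\}$ partitions $\hat I$ Lebesgue mod $0$. Finally $\hat r\ge1$ and $|g'|\ge\theta^{-1}$ off $0$ give $|\hat g'|\ge\theta^{-r_{l}}$ on $\hat I(l)$, so $\hat g$ is uniformly expanding.

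\emph{Integrability of $\hat r$.} Since $\hat r$ is in particular a return time to $\hat I$, Kac's formula for the first return time $\tau\le\hat r$, the boundedness of $d\mu_{g}/d\mathrm{Leb}$, and the uniform expansion of $g$ give that $\tau$ is Lebesgue integrable with $\mathrm{Leb}\{\tau>n\}$ decaying exponentially; and the number of returns to $\hat I$ that occur before a full branch is attained is dominated by a geometric variable (again by expansion and transitivity), so $\mathrm{Leb}\{\hat r>n\}\le C\gamma^{n}$ for some $\gamma<1$, whence $\int_{\hat I}\hat r\,d\mathrm{Leb}=\sum_{l}r_{l}\,|\hat I(l)|<\infty$.

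\emph{Bounded distortion.} On $\hat I(l)$ the chain rule gives $\log\frac{|\hat g'(x)|}{|\hat g'(y)|}=\sum_{j=0}^{r_{l}-1}\log\frac{|g'(g^{j}x)|}{|g'(g^{j}y)|}$. For $1\le j\le r_{l}-1$ the points $g^{j}x,g^{j}y$ lie at distance $\ge\delta$ from $0$, where $g$ is $C^{2}$ with bounded derivatives and $|g'|$ bounded below; there $\log|g'|$ is Lipschitz, so each such term is at most $C_{0}|g^{j}x-g^{j}y|\le C_{0}\theta^{\,r_{l}-j}|\hat gx-\hat gy|$ by uniform expansion along the orbit, and the tail over $j\ge1$ telescopes to at most $C_{1}|\hat gx-\hat gy|$. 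The genuinely delicate term, which I expect to be the main obstacle, is $j=0$ on the branches $\hat I(l)$ accumulating at $0$, where $|g'|$ is unbounded: here one must invoke the local form of $g$, $|g'(x)|\asymp|x|^{s-1}$ with $0<s<1$, to obtain $\log\frac{|g'(x)|}{|g'(y)|}\le(1-s)\frac{|x-y|}{\mathrm{dist}(\hat I(l),0)}+O(|\hat gx-\hat gy|)$, and then control the geometry of those branches, namely the uniform boundedness of the ratios $|\hat I(l)|/\mathrm{dist}(\hat I(l),0)$ --- which holds for the geometric model because the power law $g$ near $0$ pulls intervals of bounded geometry near the critical values back to intervals of bounded geometry near $0$ (for more degenerate maps this is where a slow--recurrence hypothesis on the orbit of $0$ would be needed). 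Granting this, $\hat g$ has bounded distortion on every branch, and the sharper estimate follows by the usual pull-back: if $n=n(x,y)$ then $\hat g^{n-1}$ maps $[x,y]$ into a single element of the partition, so $|x-y|\le\theta^{\,n-1}|\hat I|$ by the uniform expansion of $\hat g$; substituting this into the bounds above yields $\log\frac{|\hat g'(x)|}{|\hat g'(y)|}\le c^{\,n(x,y)}$ for a suitable $c\in(0,1)$.
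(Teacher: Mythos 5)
The paper does not actually prove this statement: Theorem 3.1 is quoted from reference [10] (D\'iaz-Ordaz), so there is no in-paper argument to compare yours against. Your sketch reconstructs the route taken in [10] and in the standard literature on Lorenz-like interval maps: induce on a nice symmetric interval around the singularity, obtain a countable full-branch Markov partition with integrable (indeed exponentially decaying tail) return time, and deduce the symbolic-H\"older distortion bound from uniform expansion together with the power-law form $|g'(x)|\asymp|x|^{s-1}$, $0<s<1$, at the singularity. You also correctly isolate the genuinely delicate point, namely the $j=0$ term on branches accumulating at $0$ and the bounded geometry $|\hat I(l)|/\mathrm{dist}(\hat I(l),0)\le C$ needed to control it.

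Two places where your outline is doing work by assertion and would need the detail of [10]: (i) the Markov property --- that the connected components of $\{\hat r=r_l\}$ really are intervals mapped diffeomorphically \emph{onto} $\hat I$ and that their complement is Lebesgue-null --- does not follow just from defining $\hat r$ as a first full-covering time; it requires the combinatorial construction (escape times plus returns to a nice interval) and is where the accumulation of branches at preimages of $0$ must be handled. (ii) The final absorption of constants: your chain of estimates yields $\log(|\hat g'(x)|/|\hat g'(y)|)\le C\lambda^{-n(x,y)}$ with a multiplicative constant $C$, and passing to the clean form $c^{n(x,y)}$ with $c<1$ and no prefactor additionally requires the single-branch distortion (the case $n(x,y)=1$) to be at most $c$, which is a separate uniform estimate obtained by shrinking $\delta$, not a formal consequence of the exponential decay for large $n$. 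Neither point is a wrong step; both are exactly where the cited reference does the real work.
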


\begin{remark}
Note that, as Lorenz map $g$ is uniformly expanding, the intersection of $(\hat g^{-n}(J(l_n)))$ over all $n\geq 0$ consists of exactly one point.
\end{remark}

Therefore, $\hat g$ may be seen as the shift map on $\hat N=\mathbb N^{\{n\geq 0\}}$: there exists a conjugation between the shift map $\hat f:\hat N\rightarrow\hat N$ and $\hat g$ presented by the next commuting diagram\\
\[\begin{CD}  
\hat N @>\hat f>>\hat N\\
@V\hat\phi VV @VV\hat\phi V\\
\hat I @>>\hat g>\hat I\\
\end{CD}\]\\
where the bijection $\hat\phi$ may be defined as
$$\hat\phi:(l_n)_{n\geq 0}\mapsto\bigcap_{n\geq 0}\hat g^{-n}(\hat I(l_n)).$$

\subsection{Bi-dimensional Markov structure}
Now, we consider the bi-dimensional domain $\hat S=\pi^{-1}(\hat I)\subset S$ and corresponding to the Markov partition of $\hat I$ define a Markov partition $\{\hat S(l)=\pi^{-1}(\hat I(l)):~l\in\mathbb N\}$ of $\hat S$. The return time is defined as
$$r(x)=\hat r(\pi(x)).$$

Hence, there exists a return map $\hat P$ to $\hat S$ as
$$\hat P(x)=P^{r(x)}(x),$$
for any $x\in\hat S$. Moreover
$$\hat g\circ\pi=\pi\circ\hat P.$$

Let
$$\Lambda_{\hat P}=\bigcap_{n\geq 0}\hat P^n(\hat S).$$
So $\Lambda_{\hat P}$ is homeomorphically equal to $N$. Indeed, since $\bigcap_{n\in\mathbb Z}\hat P^{-n}(\hat S(l_n))$ consists of exactly one point, one may define a bijection $\phi:N\rightarrow\Lambda_{\hat P}$ as
$$\phi:(l_n)_{n\in\mathbb Z}\mapsto\bigcap_{n\in\mathbb Z}\hat P^{-n}(\hat S(l_n))$$
which implies the commuting diagram
\[\begin{CD}
N @>f>>N\\
@V\phi VV @VV\phi V\\
\Lambda_{\hat P} @>>\hat P> \Lambda_{\hat P}.
\end{CD}\]

\subsection{Lifting the probability measure}
The normalized restriction $\hat\mu$ of $\mu_g$ to the domain of $\hat g$ is an absolutely continuous ergodic probability for $\hat g$ and then for $\hat f$, by conjugacy.

As the natural extension of $\hat f$ realized as the complete shift map $f$ on $N$, the \textit{lift} $\mu$ of $\hat\mu$ is the unique $f$-invariant ergodic probability measure on $N$ such that
$$\hat\pi_*\mu=\hat\mu.$$

\begin{proposition}
The lift probability $\mu$ has product structure. Moreover, the density function $\omega$ is continuous and, bounded from zero and infinity
\end{proposition}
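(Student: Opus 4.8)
The plan is to exploit the one-dimensional nature of the Lorenz map together with the bounded-distortion estimate from Theorem 3.1 to produce a bounded density for the natural extension. First I would recall that $\mu_g$ is absolutely continuous with respect to Lebesgue, with density bounded away from $0$ and $\infty$, and that after passing to the return map $\hat g$ and normalizing, $\hat\mu$ inherits the same property on $\hat I$. Transported through the conjugacy $\hat\phi$, the measure $\hat\mu$ becomes an $\hat f$-invariant measure on the one-sided shift $\hat N=\mathbb N^{\{n\ge 0\}}$; since $\hat g$ maps each $\hat I(l)$ diffeomorphically onto all of $\hat I$, the cylinder $[l_0;l_1;\dots;l_{n-1}]$ corresponds to the interval $\bigcap_{j<n}\hat g^{-j}(\hat I(l_j))$, and its $\hat\mu$-measure is comparable (uniformly in the cylinder) to the length of that interval, i.e. to $|(\hat g^n)'|^{-1}$ on the cylinder. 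This is where the Markov/full-branch structure is essential: it makes the symbolic dynamics a genuine Bernoulli-type shift so that conditional measures on cylinders behave multiplicatively up to the distortion factor $c^{n(x,y)}$.

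Next I would construct the natural extension explicitly and identify its invariant measure. The two-sided shift $f$ on $N=\mathbb N^{\mathbb Z}$ with $\phi:N\to\Lambda_{\hat P}$ is the natural extension of $(\hat f,\hat\mu)$, and the lift $\mu$ is characterized by $\hat\pi_*\mu=\hat\mu$ and $f$-invariance. To exhibit the product structure, write $\mu_s=(\pi_s)_*\mu$ on $N_s=\mathbb N^{\{n<0\}}$ and $\mu_u=(\pi_u)_*\mu$ on $N_u=N$; note $\mu_u=\hat\mu$ under the obvious identification. I would then show that the disintegration of $\mu$ along the partition into local stable sets $W^s_{\mathrm{loc}}(x)=\{y:y_n=x_n,\ n\ge 0\}$ has conditional measures that depend on the $x_u$-coordinate only through a continuous, bounded density relative to a fixed reference measure on $N_s$; equivalently, that the ratio of conditional measures of a stable cylinder at two different points $x,x'$ with the same future tends to $1$ at a geometric rate controlled by $c^{n}$. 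Concretely, the conditional measure of the stable cylinder $[l_{-k};\dots;l_{-1}]$ at $x$ equals a limit of ratios $\mu_g(\hat S(l_{-k})\cap\dots)/\mu_g(\dots)$ which, by the bounded-distortion Theorem 3.1 applied along the appropriate backward orbit, differs from the corresponding ratio at $x'$ by a factor $1+O(c^{n(x,x')})$. Summing the geometric series gives uniform two-sided bounds and continuity of the resulting Radon–Nikodym derivative $\omega(x)=d\mu_f/d(\mu_s\times\mu_u)$.

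The main obstacle I expect is the bookkeeping needed to convert the one-dimensional distortion bound into a statement about the two-sided symbolic measure: one must check that the return-time reparametrization (passing from $P$ to $\hat P$ via $r(x)=\hat r(\pi(x))$) does not destroy integrability or the comparability of cylinder measures with Jacobians, and that the contracting foliation $\mathcal F$ really lets one replace the bi-dimensional $\mu_P$ by its one-dimensional quotient $\mu_g$ when computing conditional measures along stable sets — since the stable direction inside $S$ is exactly the $\mathcal F$-leaf direction, the measure $\mu_P$ is itself a "product" of $\mu_g$ on the leaf space with the Dirac-type limit in the leaf, so the genuine new content is only in the unstable/backward direction. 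I would organize the argument as: (1) bounded density of $\hat\mu$ on cylinders of $\hat N$ from Theorem 3.1; (2) explicit formula for conditional measures of $\mu$ on stable cylinders as limits of ratios of $\hat\mu$-measures of forward cylinders under backward iteration; (3) a Cauchy/geometric-series estimate, using $c^{n(x,y)}$, showing these conditionals have a continuous bounded Radon–Nikodym derivative with respect to a fixed $\mu_s$; (4) Fubini/disintegration to assemble $\omega$ and conclude $0<\inf\omega\le\sup\omega<\infty$ with $\omega$ continuous.
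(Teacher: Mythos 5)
Your plan follows essentially the same route as the paper's proof: the bounded-distortion estimate of Theorem 3.1 gives the Jacobian formula for conditional measures on stable cylinders, the strictly increasing separation time makes the series $\sum_i c^{n(\hat x^i,\hat y^i)}$ converge geometrically so the limit of Jacobian ratios exists, is continuous and uniformly bounded, and the product density $\omega$ is then assembled by disintegration and normalization. Your steps (1)--(4) correspond directly to the paper's Steps 1--4, so the approach is correct and matches the original.
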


\begin{proof}
Note that by Theorem 3.1, for all $\hat x,\hat y$ in the same cylinder
$$\log\frac{J\hat f(\hat x)}{J\hat f(\hat y)}\leq c^{n(x,y)}.$$
The rest of proof is based on 4 main steps
\textit{Step 1.} If $\hat x,\hat y\in\hat N$ then for any $x\in W^s_{\mathrm{loc}}(\hat x)$ and $y\in W^u_{\mathrm{loc}}(x)\cap W^s_{\mathrm{loc}}(\hat y)$, the limit
$$J_{\hat x,\hat y}(x)=\lim_{n\rightarrow\infty}\frac{J\hat f^n(\hat x^n)}{J\hat f^n(\hat y^n)},$$
where $\hat x^n=\hat\pi(f^{-n}(x)),~\hat y^n=\hat\pi(f^{-n}(y))$, exists uniformly on $\hat x,\hat y,x$. Moreover,
$$(\hat x,\hat y,x)\mapsto J_{\hat x,\hat y}(x)$$
is continuous and uniformly bounded from zero and infinity.

Indeed, we observe that
$$\log\frac{J\hat f^n(\hat x^n)}{J\hat f(\hat y^n)}\leq\sum_{i=1}^n\log\frac{J\hat f(\hat x^i)}{J\hat f(\hat y^i)}.$$

Since $\hat x^i$ and $\hat y^i$ are in the same cylinder, the series is uniformly bounded by $\sum_ic^{n(\hat x^i,\hat y^i)}$. But $n(\hat x^i,\hat y^i)$ is strictly increasing that implies uniform convergence of the series.\\
\textit{Step 2.} If $\{\mu_{\hat x}:~\hat x\in\hat N\}$ be an integration of $\mu$ then, for $\mu$-almost every $\hat x\in\hat N$,
$$\mu_{\hat x}(\xi_n)=\frac{1}{J\hat f^n(\hat x^n)},$$
for every cylinder $\xi_n=[x_{-n},...,x_{-1}],~n\geq 1$, and any $x\in\xi_n\times\{\hat x\}$.\\
\textit{Step 3.} Given any disintegration, by the last step, one may find a disintegration $\{\mu_{\hat x}:~\hat x\in\hat N\}$ of $\mu$ so that
$$\mu_{\hat y}=J_{\hat x,\hat y}\mu_{\hat x}.$$
\textit{Step 4.} Fixing any $\hat x_0\in\hat N$, one may define
$$\hat\omega(x_s,x_u)=J_{\hat x_0,x_u}(x_s,x_u),$$
for every $x=(x_s,x_u)\in N$. By Step 2, $\mu_{x_u}=\hat\omega(x_s,x_u)$, for any $x_u\in\hat N$.

The lift measure $\mu$ projects to $\hat\mu=\mu_u$, but the projection $\mu_s$ to $N_s$ is given by
$$\mu_s=\mu_{\hat x_0}\int_{\hat N}\hat\omega(x_s,x_u)~d\hat\mu.$$

Therefore
$$\mu=\omega(x_s,x_u)\mu_s\times\mu_u$$
where
$$\omega(x_s,x_u)=\frac{1}{\int_{\hat N}\hat\omega(x_s,x_u)~d\hat\mu}~\hat\omega(x_s,x_u).$$

As conditional probabilities vary continuously with the base point so the density function $\omega$ is continuous. Also, $\omega$ is bounded from zero and infinity.

The i of Proposition 3.1 is now completed.\\
\end{proof}

\subsection{Suspending the bi-lateral shift}
The saturation of $N$ by the Lorenz flow $f^t$, by ergodicity of $m$ has full measure in $\Lambda$. Now on, by $\Lambda$ we mean this full measure subset. Henceforth, a return time to $N$ is defined as
$$T: N\rightarrow\mathbb R$$
$$T(x)=\sum_{j=0}^{r(x)-1}R(P^j(x)),$$
for any $x\in N$.

\begin{proposition}
The return time $T$ is integrable with respect to the probability measure $\mu$.
\end{proposition}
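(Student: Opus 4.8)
The plan is to recognize $T$ as the Birkhoff sum of the Lorenz-flow first return time $R$ under the induced Poincar\'e dynamics $\hat P$, and then to conclude with Kac's formula for induced transformations. Since $\pi\circ P=g\circ\pi$, one has $\pi\circ P^{j}=g^{j}\circ\pi$ for every $j\geq 1$ wherever the iterates are defined; as $\hat r$ is the first return time of $g$ to $\hat I$ (Theorem 3.1) and $\hat S=\pi^{-1}(\hat I)$, it follows that $r=\hat r\circ\pi$ is exactly the first return time of $P$ to $\hat S$, and that $\hat P=P^{r}$ is the corresponding first return map. Consequently, under the identification $N\cong\Lambda_{\hat P}$ given by $\phi$, the function
$$T(x)=\sum_{j=0}^{r(x)-1}R(P^{j}(x))$$
is precisely the Birkhoff sum of $R$ along the $P$-orbit segment joining $x$ to $\hat P(x)$.

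Next I would identify $\mu$ with a normalized restriction of $\mu_{P}$. The measure $\tilde\mu:=\mu_{P}(\hat S)^{-1}\,\mu_{P}|_{\hat S}$ is invariant under the first return map $\hat P$ and is ergodic, since $\mu_{P}$ is $P$-invariant and ergodic. Recalling that $\mu_{P}$ projects to $\mu_{g}$ under $\pi$ and that $\hat S=\pi^{-1}(\hat I)$, one gets $\pi_{*}\tilde\mu=\mu_{g}(\hat I)^{-1}\mu_{g}|_{\hat I}=\hat\mu$. Transported to $N$ via $\phi^{-1}$, the measure $\tilde\mu$ is therefore an $f$-invariant ergodic probability on $N$ with $\hat\pi$-image equal to $\hat\mu$; by the uniqueness of the lift to the natural extension (Subsection 3.2) it must coincide with $\mu$. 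Note also that $\mu_{P}(\hat S)=\mu_{g}(\hat I)>0$, because $\hat I=(-\delta,\delta)$ is a non-degenerate interval and $\mu_{g}$ has a density bounded away from $0$.

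Then I would apply Kac's formula for induced transformations to the probability space $(\Lambda_{P},\mu_{P})$, with the $\mu_{P}$-preserving map $P$, the positive-measure set $\hat S$, and the non-negative integrable observable $R$, obtaining
$$\int_{N}T\,\mathrm{d}\mu=\int_{\hat S}\Big(\sum_{j=0}^{r(x)-1}R(P^{j}(x))\Big)\mathrm{d}\tilde\mu(x)=\frac{1}{\mu_{P}(\hat S)}\int_{\Lambda_{P}}R\,\mathrm{d}\mu_{P}<+\infty,$$
the finiteness coming from $\int R\,\mathrm{d}\mu_{P}<+\infty$, already established from the estimate $R(x)\approx|\log\mathrm{d}(x,\Gamma)|$ near $\Gamma$, together with $\mu_{P}(\hat S)>0$. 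This gives $T\in L^{1}(\mu)$.

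The only substantive point is the measure-theoretic bookkeeping behind the displayed Kac identity, not any new analysis. One has to check that the sets $P^{j}\big(\hat S\cap\{r=n\}\big)$, $n\geq 1$, $0\leq j<n$, partition $\Lambda_{P}$ modulo $\mu_{P}$-null sets --- which uses that $r$ is the first return time, that $P$ restricts to a diffeomorphism on each element $\hat S(l)$ of the Markov partition, and that $\bigcup_{n\geq 0}P^{n}(\hat S)$ has full $\mu_{P}$-measure by ergodicity --- and that $\tilde\mu=\mu$, as in the second step. Granting these, the integrability of $R$ transfers immediately to $T$.
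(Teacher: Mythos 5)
Your proof is correct, and it is genuinely different from (and considerably more careful than) the argument in the paper. The paper works with Lebesgue measure: it writes $T(x)=r(x)\cdot\frac{1}{r(x)}\sum_{j=0}^{r(x)-1}R(P^j(x))$, asserts that the bracketed average ``converges to'' $\int R\,d(\mathrm{Leb})$ (no limit is actually being taken, since $r(x)$ is a fixed finite integer for each $x$, so as written this step is heuristic at best --- it reads like an appeal to an ergodic average or an independence/decoupling of $r$ from the Birkhoff sum that is never justified), and then passes from $\mathrm{Leb}$ to $\mu$ ``by absolute continuity,'' which is also delicate because $L^1(\mathrm{Leb})$ does not imply $L^1(\mu)$ without a bound on the density. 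Your route through the tower/Kac identity avoids all of this: you identify $T$ as the Birkhoff sum of $R$ over a return block, identify $\mu$ with the normalized restriction $\mu_P(\hat S)^{-1}\mu_P|_{\hat S}$ via the uniqueness of the ergodic lift of $\hat\mu$ to the natural extension, and then the generalized Kac formula gives $\int T\,d\mu=\mu_P(\hat S)^{-1}\int R\,d\mu_P<+\infty$ directly from the integrability of $R$, which the paper has already established. This is the standard and fully rigorous argument, and it buys an exact value for $\int T\,d\mu$ rather than mere finiteness. The one caveat is your reading of Theorem 3.1: the paper calls $\hat r$ ``the return time'' of the induced map $\hat g$ without asserting it is the \emph{first} return time to $\hat I$, and the construction in [10] need not produce a first return map. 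If $\hat r$ is not the first return time, the normalized restriction of $\mu_P$ to $\hat S$ need not be $\hat P$-invariant and the literal Kac identity fails; but your argument survives in the Abramov/Young-tower form, since $\mu_P$ is still the normalized projection of the tower measure over $(\hat P,r)$ by uniqueness of the acip, and the same computation gives $\int T\,d\mu=c^{-1}\int R\,d\mu_P<+\infty$ for the tower normalization constant $c$. It would be worth stating explicitly which of the two situations you are in.
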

\begin{proof}
For almost every $x$,
$$\int T(x)~d(Leb)=\int r(x)[\frac{1}{r(x)}\sum_{j=0}^{r(x)-1}R(P^j(x))]~d(Leb)$$
converges to
$$\int r(x)(\int R~d(Leb))~d(Leb)<+\infty$$
which implies
$$\int T~d(Leb)<+\infty.$$

The proof is now completed by absolute continuity.
\end{proof}

\section{The proof of Main Theorem}
Now, we are in the setting to complete the proof of Main Theorem.\\

For any linear cocycle $A^t$ over $\Lambda$ consider the corresponding linear cocycle $A_f$ on $N$ by
$$A_f(x)=A^{T(x)}(x),$$
for any $x\in N$.

\begin{proposition}
Lyapunov spectrum of $A^t$ is simple if and only if Lyapunov spectrum of $A_f$ is simple.
\end{proposition}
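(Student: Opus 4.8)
The plan is to relate the Lyapunov exponents of the suspension cocycle $A^t$ over the flow $f^t:\Lambda\to\Lambda$ to those of the base cocycle $A_f$ over the shift $f:N\to N$, using the standard correspondence between invariant measures for a suspension flow and invariant measures for the base map together with the return time $T$. First I would recall that the physical measure $m$ on $\Lambda$ is obtained, as in Section 2--3, as the suspension of $\mu$ by the roof function $T$; concretely $m=\phi_*(\mu\times dt)/\int T\,d\mu$, which is legitimate since $\int T\,d\mu<+\infty$ by Proposition 3.3. Under this identification a point of $\Lambda$ is represented (mod $m$) by a pair $(x,s)$ with $x\in N$ and $0\le s<T(x)$, and the flow cocycle satisfies the cocycle identity $A^{t+t'}(y)=A^{t'}(f^{t'}y)A^t(y)$; in particular $A^{T(x)}(x)=A_f(x)$, and for $0\le s<T(x)$ one has $A^s(x)=A^s(x)$ acting between fibers over $(x,0)$ and $(x,s)$.

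The key step is the Abramov-type computation of Lyapunov exponents. For $m$-a.e.\ $(x,s)$ and a vector $v$ in a fixed Oseledets subspace, I would write, for large real $t$, $A^t(x,s)v$ as a product of the pieces $A^{T(f^{k}x)}(f^kx)=A_f(f^kx)$ over the returns $k=0,1,\dots,n(t)-1$ that occur before time $t$, pre- and post-composed by two bounded "boundary" factors coming from the fractional parts of the flow time at the two ends. Because $T$ is $\mu$-integrable, Birkhoff's theorem gives $n(t)/t\to 1/\int T\,d\mu$ for a.e.\ point, and the boundary factors are bounded (the cocycle $A^t$ is continuous in $t$ on the compact interval $[0,T(x)]$ for a.e.\ $x$, hence the norms of $A^s(x)$ and of $A^{T(x)-s'}(f^{n(t)}x)$ stay in a fixed compact set for $x$ in a large-measure set; one then removes the bad set by a standard exhaustion). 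Taking logarithms, dividing by $t$, and using subadditivity/Oseledets applied to $A_f$ along the orbit $f^k x$, I would conclude
$$\lambda_{A^t}(x,s;v)=\frac{1}{\int T\,d\mu}\,\lambda_{A_f}(x;v)$$
for $m$-a.e.\ $(x,s)$ and every $v\neq0$, where $\lambda_{A^t}$ and $\lambda_{A_f}$ denote the respective Lyapunov exponents. Moreover the Oseledets subspace of $A^t$ over $(x,s)$ is the image under $A^s(x)$ of the Oseledets subspace of $A_f$ over $x$, so the multiplicities are preserved.

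From the displayed identity the proposition is immediate: since $\int T\,d\mu$ is a fixed positive constant, the map $\lambda\mapsto \lambda/\int T\,d\mu$ is an order-preserving bijection from the Lyapunov spectrum of $A_f$ onto that of $A^t$, matching exponents with equal multiplicities; in particular all exponents of $A^t$ are simple if and only if all exponents of $A_f$ are simple. (One should note $m$ and $\mu$ are both ergodic, so the spectra are genuinely well-defined numbers, not just a.e.\ functions.) The main obstacle I anticipate is purely technical rather than conceptual: making the "boundary factor is bounded" argument rigorous when $T$ is unbounded near $\Gamma$ — one must check that the fractional-time pieces $A^s(x)$ with $0\le s<T(x)$ have norms controlled uniformly off a set of small $m$-measure, and that these exceptional sets can be taken nested with intersection of measure zero, so that the Birkhoff/Oseledets limits are unaffected. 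This is standard in the Abramov formalism for cocycles over suspension flows but deserves a careful statement given that here $T$ is only $L^1$, not bounded.
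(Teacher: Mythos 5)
Your proposal is correct and follows essentially the same route as the paper: both establish the Abramov-type relation that the exponents of $A_f$ equal those of $A^t$ multiplied by the average return time $\int T\,d\mu$, using Birkhoff's theorem for the integrable roof function $T$, and deduce that simplicity is preserved since this constant rescaling is an order-preserving bijection of the spectra. Your version is in fact more careful than the paper's (which omits the control of the boundary factors and the matching of Oseledets subspaces/multiplicities that you spell out), but the underlying idea is identical.
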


\begin{proof}
The Lyapunov exponents of $A_f$ are obtained by multiplying those of $A^t$ by the average return time
$$~s_n(x)=\sum_{j=0}^{n-1} T(\hat P^j(x)),~x\in N.$$
Indeed, given any non zero vector $v$,
$$\lim_{n\rightarrow+\infty}\frac{1}{n}\log||A_f^n(x)v||=\lim_{n\rightarrow+\infty}\frac{1}{n}\log||A^{s_n(x)}(x)v||$$
which, for $\mu$-almost every $x$, this is equal to
$$\lim_{n\rightarrow+\infty}\frac{1}{n}s_n(x)\lim_{m\rightarrow+\infty}\frac{1}{m}\log||A^m(x)v||.$$
But $\frac{1}{n}s_n(x)$ converges to $\int T~d\mu<+\infty$

The proof of Proposition 4.1 is now completed.\\
\end{proof}

Let $A^t$ be a linear cocycle over $\Lambda$. We define a neighborhood $\mathcal V$ of $A^t$ as the subset of all cocycles $B^t$ over $\Lambda$ for which $B_f\in\mathcal U$.

\begin{proposition}
The application
$$\mathcal V\ni B^t\mapsto B_f\in\mathcal U$$
is a submersion.
\end{proposition}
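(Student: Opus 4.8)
The plan is to exhibit $\mathcal C^{r,\rho}(\Lambda,d,\mathbb C)$, near any given cocycle, as a product having $\mathcal C^{r,\rho}(N,d,\mathbb C)$ as a direct factor, in such a way that $\Phi\colon B^{t}\mapsto B_{f}$ becomes the linear projection onto that factor; a continuous linear projection between Banach spaces is a submersion, and that is all we need. To set this up, recall that a linear cocycle $B^{t}$ over the suspension flow $f^{t}$ is the same datum as a family of paths $\gamma^{B}_{x}\colon[0,T(x)]\to\mathrm{GL}(d,\mathbb C)$, $x\in N$, of the appropriate regularity in $x$, with $\gamma^{B}_{x}(0)=\mathrm{id}$: one has $\gamma^{B}_{x}(t)=B^{t}(x,0)$, the return cocycle is the endpoint family $B_{f}(x)=\gamma^{B}_{x}(T(x))$, and the cocycle relation $B^{s+t}(z)=B^{t}(f^{s}(z))\,B^{s}(z)$ reconstructs $B^{t}$ at every point $z\in\Lambda$ by concatenating these paths and inserting the matrices $B_{f}$. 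Apart from the normalisation $\gamma^{B}_{x}(0)=\mathrm{id}$ the endpoints are unconstrained, and by definition the relative topology on $\mathcal C^{r,\rho}(\Lambda,d,\mathbb C)$ records precisely the endpoint family $B_{f}$. Thus $\Phi$ is the endpoint map.

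Fix $A^{t}\in\mathcal V$, so $A_{f}\in\mathcal U$. First I would build a local section $\sigma$ of $\Phi$ through $A^{t}$. For $B_{f}$ in a small $C^{0}$-neighbourhood of $A_{f}$ the matrix $A_{f}(x)^{-1}B_{f}(x)$ lies uniformly close to the identity, so its principal logarithm $L_{B}(x)=\log\bigl(A_{f}(x)^{-1}B_{f}(x)\bigr)$ is well defined, uniformly bounded, and depends on $B_{f}$ with the same regularity as $B_{f}$. Let $\sigma(B_{f})$ be the flow cocycle associated with the paths
$$\sigma(B_{f})^{t}(x,0)=A^{t}(x,0)\,\exp\!\Bigl(\tfrac{t}{T(x)}\,L_{B}(x)\Bigr),\qquad 0\le t\le T(x),$$
extended to $\Lambda$ by the cocycle relation; note $\tfrac{t}{T(x)}\in[0,1]$, so no control on the (logarithmically unbounded) return time is required. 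The endpoint at $t=T(x)$ is $A^{T(x)}(x,0)\,\bigl(A_{f}(x)^{-1}B_{f}(x)\bigr)=B_{f}(x)$, so the prescription is consistent across the identification $(x,T(x))\sim(f(x),0)$ and defines a genuine linear cocycle with $\Phi(\sigma(B_{f}))=B_{f}$, while $\sigma(A_{f})=A^{t}$. Since the topology on $\mathcal C^{r,\rho}(\Lambda,d,\mathbb C)$ remembers only return cocycles, $\sigma$ is as regular as $B_{f}\mapsto B_{f}$, hence smooth, and $D\sigma$ is a bounded linear right inverse of $D\Phi$ at $A^{t}$.

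Now conclude: $\Phi\circ\sigma=\mathrm{id}$ gives $D\Phi\circ D\sigma=\mathrm{id}$, so $D\Phi$ is onto; and $P=D\sigma\circ D\Phi$ is a bounded idempotent on $T_{A^{t}}\mathcal C^{r,\rho}(\Lambda,d,\mathbb C)$ with $\ker P=\ker D\Phi$, so $\ker D\Phi$ is complemented, with closed complement $\mathrm{im}\,D\sigma$. Hence $\Phi$ is a submersion at $A^{t}$; since $A^{t}\in\mathcal V$ was arbitrary and only $A_{f}\in\mathcal U$ was used, $\Phi$ is a submersion on all of $\mathcal V$.

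The only genuine point to check is that the interpolated paths do assemble into a \textit{bona fide} continuous linear cocycle over the whole flow — compatibility with the gluing $(x,T(x))\sim(f(x),0)$ and the cocycle identity at every point — and, for the Banach notion of submersion, that $\ker D\Phi$ is complemented rather than merely that $D\Phi$ is surjective; both are delivered by the explicit section, and I expect this, together with checking $B_{f}\mapsto L_{B}$ is $C^{r,\rho}$, to be the only routine labour. What makes everything painless is that the norm on $\mathcal C^{r,\rho}(\Lambda,d,\mathbb C)$ depends on a cocycle only through its return cocycle, so the interior behaviour of the paths — in particular the blow-up of $T$ near $\Gamma$ — never intervenes.
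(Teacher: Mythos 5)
Your argument is essentially the paper's: both prove surjectivity of the derivative by exhibiting an explicit right inverse of $B^t\mapsto B_f$, exploiting the fact that the $C^{r,\rho}$ topology on flow cocycles is defined through the return cocycle alone, so any prescribed return cocycle (or infinitesimal perturbation of one) can be suspended back to a flow cocycle. Your version is somewhat more careful --- you interpolate smoothly via the matrix logarithm where the paper simply inserts the perturbation at the return time, and you also verify that $\ker D\Phi$ is complemented, which the paper omits --- but the underlying mechanism is the same.
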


\begin{proof}
By definition,
$$\partial_{B^t}B_f(\dot B_t)=\dot B_f.$$

Let $\dot B\in\mathcal C^{r,\rho}(N,d,\mathbb C)$. Then the suspension $\dot B^t$ of $\dot B$ is defined by
$$\dot B^t(X^s(x))=(\mathrm{id},t+s),~0<t+s\leq T(x),$$
identifying $(\mathrm{id},T(x))$ with $(\dot B(x),0)$, for any $x\in N$, setting $\dot B^0=\mathrm{id}$. $\dot B^t$ is an $\eta$-H\"older linear cocycle over $\Lambda$ for which $\dot B_f(x)=(\dot B(x),0)$. This shows that the derivative is surjective.

The proof of Proposition 4.2 is now completed.
\end{proof}

The proof of Main Theorem is then completed, by Theorem 1.1.\\\\
\textbf{\textit{Acknowledgments.}}
I would like to thanks M. Viana for all supports during my PhD studies at IMPA. This work is supported by a doctoral grant from CNPq-TWAS.

\addcontentsline{toc}{section}{References}

\textit{Mohammad Fanaee}

\textit{Instituto de Matemática e Estatística (IME)}

\textit{Universidade Federal Fluminense (UFF)}

\textit{Campus Valonguinho}

\textit{24020-140 Niter\'oi-RJ-Brazil}

\textit{Email: mf@id.uff.br}
\end{document}